\newcommand{\re}{\mathbb{R}}
\newcommand{\na}{\mathbb{N}}
\newcommand{\eps}{\varepsilon}
\newcommand{\diver}{\operatorname{div}}
\newcommand{\intq}{\int_{Q_T}}
\newcommand{\supp}{\operatorname{supp}}
\newcommand{\uzmn}{u_0^{m,n}}
\newcommand{\fmn}{f^{m,n}}
\newcommand{\umn}{u_{m,n}}
\newcommand{\vmn}{v_{m,n}}
\newcommand{\vzmn}{v_0^{m,n}}
\newtheorem{defi}{Definition}[section]
\newtheorem{lem}[defi]{Lemma}
\newtheorem{teo}[defi]{Theorem}
\newtheorem{remark}[defi]{Remark}
\newenvironment{proof}{\noindent{\textit{Proof.}}}{\hfill$\square$}
\numberwithin{equation}{section}
\title{Entropy solutions for time-fractional porous medium type equations}
\date{}
\author{Kerstin Schmitz\footnote{Fakultät für Mathematik, Universität Duisburg-Essen, Thea-Leymann-Str.~9, 45127~Essen, Germany, E-mail: kerstin.schmitz.ks@outlook.de}\hspace{0.1cm} and Petra Wittbold\footnote{Fakultät für Mathematik, Universität Duisburg-Essen, Thea-Leymann-Str.~9, 45127~Essen, Germany, E-mail: petra.wittbold@uni-due.de}}
\begin{document}

\maketitle

\begin{abstract}
In this paper we prove existence of entropy solutions to the time-fractional porous medium type equation,
$$\partial_t[k\ast(u-u_0)]-\diver (A(t,x)\nabla\varphi(u))=f\text{ in }Q_T=(0,T)\times\Omega,$$
with Dirichlet boundary condition, initial condition $u(0,\cdot)=u_0$ in $\Omega$, and $L^1$-data $f\in L^1((0,T)\times\Omega), u_0\in L^1(\Omega)$. To this end we approximate the data by $L^\infty$-functions, use a known existence result of weak solutions for these more regular data, and additionally a known contraction principle for weak solutions, which can be adopted to the entropy solutions.\\
\end{abstract}

\textbf{AMS Subject Classification:} 35R11, 45K05, 47G20, 35D99, 35B51\\

\textbf{Keywords:} fractional time derivative, entropy solution, subdiffusion, porous medium type equation, $L^1$-data

\section{Introduction}

We consider for $T>0$ and $\Omega\subseteq\re^d$ a bounded domain, $d\in\na$, the problem
\begin{align}\label{equation}\tag{$P(u_0,f)$}
\begin{cases}
\begin{aligned}
\partial_t[k\ast(u-u_0)]-\diver (A(t,x)\nabla\varphi(u))&=f &&\text{in }Q_T=(0,T)\times\Omega\\
u&=0 &&\text{in }\Sigma=(0,T)\times\partial\Omega\\
u(0,\cdot)&=u_0&&\text{in }\Omega,
\end{aligned}
\end{cases}
\end{align}
where for $t\in[0,T]$ we define
\begin{align*}
(k\ast v)(t):=\int_0^tk(t-\tau)v(\tau)\,d\tau.
\end{align*}

We make the following assumptions:
\begin{itemize}
\item[(H$k$)] $k\in L^1_\text{loc}(\re^+)$ is non-negative, non-increasing, and there exists $l\in L^p(0,T)$ with $p>1$ such that $k\ast l=1$ in $(0,\infty)$.
\item[(H$A$)] $A\in L^\infty((0,T)\times\Omega;\re^{d\times d})$ and there exists $\nu>0$ such that
\begin{align*}
(A(t,x)\xi,\xi)\geq\nu|\xi|^2\quad\forall\xi\in\re^d\text{ and  a.e. }(t,x)\in Q_T.
\end{align*}
\item[(H$\varphi$)] $\varphi\in C^1(\re)$, $\varphi'(r)\geq 0$ for all $r\in\re$, $\varphi(0)=0$, $\varphi$ is strictly increasing in $\re$, and there exist $\mu,R>0$ such that
\begin{align*}
0<\mu\leq \varphi'(r)\quad\forall r\in\re\text{ with }|r|>R.
\end{align*}
\item[(Hd)] $u_0\in L^1(\Omega)$, $f\in L^1(Q_T)$.
\end{itemize}

Note that kernels $k$ satisfying (H$k$) are in particular kernels of type $\mathcal{P}\mathcal{C}$, which have been studied by many authors, see, e.g., \cite{VZ15,KSVZ16,K11,VZ17}. Kernels of type $\mathcal{P}\mathcal{C}$ are used in applications to model subdiffusion processes. Subdiffusion is a special case of anomalous diffusive behaviour which is in the force free-limit slower than Brownian motion. For more informations, see \cite{MK00,MK04}.
An important example is given by $(k,l):=(g_{1-\alpha},g_\alpha)$ for $\alpha\in(0,1)$, where 
\begin{align*}
g_\beta(t):=\frac{t^{\beta-1}}{\Gamma(\beta)}\text{ for }t>0,\ \beta>0.
\end{align*}
In this case $\partial_t(k\ast v)$ represents the Riemann-Liouville fractional derivative of order $\alpha$ and $k\ast\partial_t v$ the Caputo derivative if $v$ is sufficient smooth. Note, that in this case the condition $l\in L^p(0,T)$ is satisfied.
Some further examples of kernels satisfying (H$k$) are the time-fractional case with exponential weight:
$$k(t):=e^{-\gamma t}g_{1-\alpha}(t),\ l(t):=e^{-\gamma t}g_\alpha(t)+\gamma [1\ast (g_\alpha e^{-\gamma \cdot})](t)$$
for $\alpha\in(0,1)$ and $\gamma >0$,
and the ultra-slow diffusion case: $$k(t):=\int_0^1 g_\beta(t)\,d\beta, \ l(t):=\int_0^\infty\frac{e^{-st}}{1+s}\,ds,$$ which is considered in \cite{K08, K11, KR18, KSVZ16}.\\

By assuming (H$\varphi$) we cover degenerated time-fractional equations. For example we can chose $\varphi(r):=|r|^{m-1}r$ for $m>1$, so that \ref{equation} becomes a porous medium equation, which has been studied in \cite{A19, ACV17, DVV19, DNA19, P15, VZ15}.\\

In applications \ref{equation} appears in the modelling of dynamic processes with memory, for example, to model heat conduction with memory (see \cite{N71,Pruess}) and diffusion in fluids in porous media with memory \cite{C99,JDiss}.\\

Existence of weak solutions to \ref{equation} and additionally a contraction principle for weak solutions were shown for more regular data $u_0,f$ in \cite{WWZ20}. In the linear case existence and uniqueness of weak solutions were shown in \cite{Z19, LRS19, VZ17}. For the porous medium operator $L^1$ is a natural space guaranteeing the monotonicity property and also from the physical point of view $L^1$ is a useful space for several evolution problems, e.g., the transport of fluids in porous media, and heat conduction. In the setting of $L^1$-data we cannot expect weak solutions. Therefore, we work with entropy solutions.\\

For the doubly-nonlinear history-dependent (degenerated) problem with a time-independent operator existence and uniqueness of entropy solutions (also in the case of $L^1$-data) were shown in \cite{SW18,JW04,S20}. Here the theory about generalized solutions for integro-differential equations (see \cite{G85}), using the $m$-accretivity of the time-independent operator, is applied. In the case of a time-dependent operator we cannot apply this approach. Note, that even in the linear case, i.e., $\varphi= id$ no existence results for $L^1$-data are known.\\

Note that there are several articles dealing with decay estimates for time-fractional (porous medium type) equations, see, e.g., \cite{KSVZ16,DVV19,VZ15}.\\

The paper is structured in the following way: In Section \ref{sec_L^infty_data} we consider bounded data $u_0\in L^\infty(\Omega),f\in L^\infty(Q_T)$. In this case, existence of weak solutions was shown in \cite{WWZ20}. We prove, that a weak solution to \ref{equation} is also an entropy solution to \ref{equation} by using the fundamental identity (see \cite[Lemma 2.1]{WWZ20}).\\
Afterwards, we formulate in Section \ref{sec_contraction_principle} a contraction principle for the weak solutions, which is a technical extension of the contraction principle formulated in \cite{WWZ20}.\\
In Section \ref{sec_approximation} we consider general data $u_0\in L^1(\Omega),f\in L^1(Q_T)$ and approximate them by functions $u_0^{m,n}\in L^\infty(\Omega), f^{m,n}\in L^\infty(Q_T)$. We know, that there exists an entropy solution to the approximated equation $P(u_0^{m,n},f_{m,n})$ and can show by the contraction principle that $u_{m,n}$ converges to a function $u\in L^1(Q_T)$.\\
In Section \ref{sec_passage_to_the_limit} we then pass to the limit in the equation. Here, we use the coercivity condition (H$A$) of the operator $A$, and, furthermore, the fact that $\varphi$ is increasing to take advantage of the monotone convergences of the approximations.

\section{Entropy solutions in the case of $L^\infty$-data}\label{sec_L^infty_data}

The idea is to approximate the data $u_0,f$ by bounded data in $L^\infty(\Omega)$, $L^\infty(Q_T)$ respectively. By \cite[Theorem 6.1]{WWZ20}, we know that \ref{equation} then admits a weak solution. We first show that any weak solution to \ref{equation} is an entropy solution.\\
For a space $V\subseteq W^{1,1}(0,T;X)$, where $X$ is a Banach space, we denote by ${}_0V$ the space of all $\psi\in V$ that vanish at $t=0$.
We set
\begin{align*}
W_\varphi(T,u_0):=\big\{w\in L^2(0,T;L^2(\Omega)):&\ k\ast(w-u_0)\in {}_0W^{1,1}(0,T;H^{-1}(\Omega))\\
&\text{ and }\varphi(w)\in L^2(0,T;H^1_0(\Omega)) \big\}.
\end{align*}

\begin{defi}
Let (H$k$), (H$A$), (H$\varphi$), and (Hd) be satisfied. A function $u\in W_\varphi(T,u_0)$ is a weak solution to \ref{equation}, if for any test function $\eta\in W^{1,1}(0,T;L^2(\Omega))\cap L^2(0,T;H^1_0(\Omega))$ with $\eta(T,\cdot)=0$ there holds
\begin{align*}
\int_0^T\int_\Omega-\eta_t[k\ast(u-u_0)]+(A\nabla\varphi(u),\nabla\eta)\,dx\,dt=\int_0^T\int_\Omega f\eta\,dx\,dt.
\end{align*}
\end{defi}

Under the regularity condition
\begin{align}\label{regularity_assumption}
k\ast(u-u_0)\in{}_0W^{1,1}(0,T;L^1(\Omega)),
\end{align}
one can show by an approximation argument that for a weak solution $u$
\begin{align}\label{eq_weaksolution}
\int_0^T\int_\Omega\eta\partial_t[k\ast(u-u_0)]+(A\nabla\varphi(u),\nabla\eta)\,dx\,dt=\int_0^T\int_\Omega f\eta\,dx\,dt
\end{align}
is satisfied for all $\eta\in L^2(0,T;H^1_0(\Omega))\cap L^\infty(Q_T)$, and by a cut-off function argument this is equivalent to
\begin{align}\label{eq_weaksolution_t1}
\int_0^{t_1}\int_\Omega\eta\partial_t[k\ast(u-u_0)]+(A\nabla\varphi(u),\nabla\eta)\,dx\,dt=\int_0^{t_1}\int_\Omega f\eta\,dx\,dt
\end{align}
for all $t_1\in(0,T]$ and all $\eta\in L^2(0,T;H^1_0(\Omega))\cap L^\infty(Q_T)$.

Since $\varphi\in C^1(\re)$ is strictly increasing and $\varphi(0)=0$, we can define the function $b:=\varphi^{-1}$, which is continuous, strictly increasing, and satisfies $b(0)=0$. If we define $v:=\varphi(u)$ and $v_0:=\varphi(u_0)$, then \ref{equation} is equivalent to
\begin{align}
\begin{cases}
\begin{aligned}
\partial_t[k\ast(b(v)-b(v_0))]-\diver (A(t,x)\nabla v)&=f &&\text{in }Q_T\\
v&=0 &&\text{in }\Sigma\\
v(0,\cdot)&=v_0&&\text{in }\Omega.
\end{aligned}
\end{cases}
\end{align}

We define an entropy solution to \ref{equation} based on the definition in \cite{JW04}. Therefore, we set
\begin{align*}
\mathcal{P}:=\left\{S\in C^1(\re): 0\leq S'\leq 1,\ \supp S' \text{ compact},\ S(0)=0 \right\}.
\end{align*}

\begin{defi}
Let (H$k$), (H$A$), (H$\varphi$) and (Hd) be satisfied. A measurable function $v:Q_T\rightarrow\re$ is called an entropy solution to \ref{equation} if $b(v)\in L^1(Q_T), T_K(v)\in L^2(0,T;H^1_0(\Omega))$ for all $K>0$, and
\begin{align*}
&-\int_{Q_T}\zeta_t\left[k_1\ast\int_{v_0}^v S(\sigma-\phi)\,db(\sigma) \right]+\int_{Q_T}\zeta\partial_t[k_2\ast(b(v)-b(v_0))]S(v-\phi)\\
&+\intq\zeta (A(t,x)\nabla v,\nabla S(v-\phi))\leq\intq\zeta f S(v-\phi)
\end{align*}
for all $\phi\in H^1_0(\Omega)\cap L^\infty(\Omega)$, $\zeta\in\mathcal{D}([0,T)),\zeta\geq0$, $S\in\mathcal{P}$, and $k_1,k_2\in L^1(0,T)$ non-increasing and non-negative with $k=k_1+k_2$ and $k_2(0^+)<\infty$.
\end{defi}

In order to show that a weak solution $u$ to \ref{equation} is also an entropy solution to \ref{equation}, we will use $S(v-\phi)\zeta$ as a test function where $S\in\mathcal{P},\phi\in H^1_0(\Omega)\cap L^\infty(\Omega),$ and $\zeta\in\mathcal{D}([0,T))$ with $\zeta\geq 0$. Since $S(v-\phi)\zeta$ is an element of $L^2(0,T;H^1_0(\Omega))\cap L^\infty(Q_T)$, but not in $W^{1,1}(0,T;L^2(\Omega))\cap L^2(0,T;H^1_0(\Omega))$, we have to assume \eqref{regularity_assumption}.

\begin{lem}\label{lem_weak_entropy_solution}
Let (H$k$), (H$A$), (H$\varphi$), and (Hd) be satisfied. If $u$ is a weak solution to \ref{equation}, which satisfies \eqref{regularity_assumption}, then $v=\varphi(u)$ is an entropy solution to \ref{equation}.
\end{lem}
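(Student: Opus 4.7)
The natural approach is to plug $\eta(t,x):=\zeta(t)\,S(v(t,x)-\phi(x))$ into the weak formulation \eqref{eq_weaksolution_t1}, with $\zeta\in\mathcal{D}([0,T))$, $\zeta\ge 0$, $\phi\in H^1_0(\Omega)\cap L^\infty(\Omega)$, and $S\in\mathcal{P}$. Since $v=\varphi(u)\in L^2(0,T;H^1_0(\Omega))$ while $S$ is bounded and Lipschitz with $S(0)=0$, this $\eta$ belongs to $L^2(0,T;H^1_0(\Omega))\cap L^\infty(Q_T)$; together with \eqref{regularity_assumption}, this is precisely the class of test functions admitted by \eqref{eq_weaksolution_t1}. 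The spatial chain rule yields $\nabla\eta=\zeta\,\nabla S(v-\phi)$, so the diffusion and source contributions reproduce $\intq\zeta\,(A\nabla v,\nabla S(v-\phi))$ and $\intq\zeta f\,S(v-\phi)$ directly in the form required by the entropy inequality.

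What remains is the nonlocal time term. Using $u=b(v)$, $u_0=b(v_0)$ and splitting $k=k_1+k_2$ as in the definition, the time term becomes
\[
\intq\zeta\,S(v-\phi)\,\partial_t[k_1\ast(b(v)-b(v_0))]\;+\;\intq\zeta\,S(v-\phi)\,\partial_t[k_2\ast(b(v)-b(v_0))].
\]
The $k_2$-integral already coincides with the corresponding term of the entropy inequality. For the $k_1$-integral I will invoke the fundamental identity \cite[Lemma 2.1]{WWZ20} applied to $w:=b(v)$, $w_0:=b(v_0)$ with the convex function $\Psi(r):=\int_0^r S(\varphi(\sigma)-\phi)\,d\sigma$. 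Convexity follows from $S'\ge 0$ and monotonicity of $\varphi$; moreover $\Psi'(b(v))=S(v-\phi)$, and the substitution $\sigma=b(\tau)$ gives $\Psi(b(v))-\Psi(b(v_0))=\int_{v_0}^v S(\sigma-\phi)\,db(\sigma)$. The fundamental identity then produces the subordination inequality
\[
S(v-\phi)\,\partial_t[k_1\ast(b(v)-b(v_0))]\;\ge\;\partial_t\left[k_1\ast\int_{v_0}^v S(\sigma-\phi)\,db(\sigma)\right].
\]

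Multiplying by $\zeta\ge 0$, integrating over $Q_T$, and integrating by parts in time transfers the derivative onto $\zeta$; the boundary contributions vanish because $\zeta(T)=0$ and $(k_1\ast g)(0)=0$ for any integrable $g$. This reproduces the term $-\intq\zeta_t\,[k_1\ast\int_{v_0}^v S(\sigma-\phi)\,db(\sigma)]$ of the entropy inequality. Combining this estimate with the equality \eqref{eq_weaksolution_t1} and the unaltered $k_2$-contribution produces exactly the required entropy inequality.

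The main obstacle is the rigorous application of the fundamental identity in this setting: one has to verify its hypotheses for the pair $(\Psi,b(v))$ with only the regularity provided by $v\in W_\varphi(T,u_0)$ and \eqref{regularity_assumption}, and to render the Stieltjes-type expression $\int_{v_0}^v S(\sigma-\phi)\,db(\sigma)$ sufficiently regular in time to justify the integration by parts against $\zeta$. Boundedness of $S$ and of $\phi$, together with the growth assumption in (H$\varphi$), should render $\Psi\circ b$ tractable enough to fit into the framework of \cite[Lemma 2.1]{WWZ20}.
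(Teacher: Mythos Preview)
Your overall strategy is exactly that of the paper: test with $\zeta\,S(v-\phi)$, split $k=k_1+k_2$, keep the $k_2$-term as it stands, and use the fundamental identity of \cite[Lemma~2.1]{WWZ20} on the $k_1$-term to produce the entropy inequality. The gap you yourself flag at the end is, however, the heart of the matter, and you do not close it.

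The fundamental identity in \cite[Lemma~2.1]{WWZ20} is stated for the regularised kernel $k_\lambda$ coming from the Yosida approximation of $\partial_t(k\ast\cdot)$, not for the raw kernel $k_1$. With only $k_1\in L^1(0,T)$ and $k_1\ast(b(v)-b(v_0))\in{}_0W^{1,1}(0,T;L^1(\Omega))$, neither the pointwise inequality you write nor the subsequent integration by parts against $\zeta$ is justified; indeed there is no reason why $k_1\ast\int_{v_0}^v S(\sigma-\phi)\,db(\sigma)$ should lie in $W^{1,1}(0,T;L^1(\Omega))$. The paper circumvents this by replacing $k_1$ with $k_{1,\lambda}$, for which the fundamental identity applies directly, obtaining
\[
-\intq\zeta_t\Big[k_{1,\lambda}\ast\int_{v_0}^v S(\sigma-\phi)\,db(\sigma)\Big]+\text{(other terms)}\le\intq\zeta f S(v-\phi)+\intq\zeta\,S(v-\phi)\,\partial_t[(k_{1,\lambda}-k_1)\ast(b(v)-b(v_0))],
\]
and then passes to the limit $\lambda\to 0$: on the left one uses $k_{1,\lambda}\to k_1$ in $L^1(0,T)$ together with the bound $\big|\int_{v_0}^v S(\sigma-\phi)\,db(\sigma)\big|\le \|S\|_\infty(|b(v)|+|b(v_0)|)\in L^1(Q_T)$, and on the right the error term vanishes because $b(v)-b(v_0)\in D(L_1)$ implies $\partial_t[k_{1,\lambda}\ast(b(v)-b(v_0))]\to\partial_t[k_1\ast(b(v)-b(v_0))]$ in $L^1(Q_T)$. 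This regularisation step is precisely the missing ingredient in your argument.
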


\begin{proof}
Let $u$ be a weak solution to \ref{equation} and $S\in\mathcal{P},\phi\in H^1_0(\Omega)\cap L^\infty(\Omega), \zeta\in\mathcal{D}([0,T)),$ and $v:=\varphi(u), v_0:=\varphi(u_0)$. We use $S(v-\varphi)\zeta$ as a test function in \eqref{eq_weaksolution} to obtain
\begin{align*}
\intq\zeta S(v-\phi)\partial_t[k\ast(b(v)-b(v_0))]+\intq \zeta(A\nabla v,\nabla S(v-\phi))=\intq fS(v-\phi)\zeta.
\end{align*}
Now we choose arbitrary $k_1,k_2\in L^1(0,T)$ non-increasing and non-negative with $k_2(0^+)<\infty$ such that $k=k_1+k_2$. For $\lambda>0$ we define $k_{1,\lambda}$ by the kernel associated to the Yosida-approximation of the operator $L_1:=\partial_t(k_1\ast\cdot)$, $D(L_1):=\{w\in L^1(0,T;L^1(\Omega)):k_1\ast w\in {}_0W^{1,1}(0,T;L^1(\Omega))\}$. Then there holds
\begin{align*}
&\intq\zeta S(v-\phi)\partial_t[k_{1,\lambda}\ast(b(v)-b(v_0))]+\intq\partial_t[k_2\ast(b(v)-b(v_0))]S(v-\phi)\zeta\\
&+\intq\zeta(A\nabla v,\nabla S(v-\phi))\\
&=\intq fS(v-\phi)\zeta+\intq\zeta S(v-\phi)\partial_t[(k_{1,\lambda}-k_1)\ast(b(v)-b(v_0))].
\end{align*}
Using the fundamental identity (see \cite[Lemma 2.1]{WWZ20}) for the first term in the above equation we get
\begin{align*}
&-\intq\zeta_t\left(k_{1,\lambda}\ast\int_{v_0}^vS(\sigma-\phi)\,db(\sigma)\right)+\intq\zeta S(v-\phi)\partial_t[k_2\ast(b(v)-b(v_0))]\\
&+\intq\zeta (A\nabla v,\nabla S(v-\phi))\\
&\leq\intq\zeta fS(v-\phi)+\intq\zeta S(v-\phi)\partial_t[(k_{1,\lambda}-k_1)\ast(b(v)-b(v_0))].
\end{align*}
Since $k_{1,\lambda}\rightarrow k_1$ in $L^1(0,T)$ and $b(v)-b(v_0)\in D(L_1)$ and so $\partial_t[k_{1,\lambda}\ast(b(v)-b(v_0))]\rightarrow \partial_t[k_1\ast(b(v)-b(v_0))]$ in $L^1(Q_T)$, we obtain, by passing to the limit in the above equation that $v$ is an entropy solution to \ref{equation}.
\end{proof}

\section{Contraction principle}\label{sec_contraction_principle}

\begin{lem}\label{Contraction principle}
Let $(k,l)\in\mathcal{P}\mathcal{C}$, (H$A$) be satisfied and $\varphi\in C^1(\re)$ a strictly increasing function in $\re$. Let $u_i\in W_\varphi(u_{0,i},f_i), i=1,2$, be weak solutions to problem \ref{equation} with $u_0=u_{0,i}\in L^1(\Omega)$ and $f=f_i\in L^1(Q_T)$ in the sense that \eqref{regularity_assumption} is fulfilled and in particular \eqref{eq_weaksolution_t1} holds true for all $\eta\in L^2(0,T;H^1_0(\Omega))\cap L^\infty(Q_T)$. Then
\begin{align}
\|u_1-u_2\|_{L^1(Q_T)}&\leq T\|u_{0,1}-u_{0,2}\|_{L^1(\Omega)}+\|l\|_{L^1(0,T)}\|f_1-f_2\|_{L^1(Q_T)} \label{contraction_principle_betrag}\\
\intq(u_1-u_2)^+&\leq T\int_\Omega(u_{0,1}-u_{0,2})^++\|l\|_{L^1(0,T)}\intq(f_1-f_2)^+ \label{contraction_principle_+}\\
\intq(u_1-u_2)^-&\leq T\int_\Omega(u_{0,1}-u_{0,2})^-+\|l\|_{L^1(0,T)}\intq(f_1-f_2)^-. \label{contraction_principle_-}
\end{align}
\end{lem}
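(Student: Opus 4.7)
The plan is the classical $L^1$-contraction argument for parabolic problems, adapted to the Volterra/fractional setting: subtract the two weak formulations, test the difference equation with a smooth approximation of $\sign^+$ evaluated at $V:=\varphi(u_1)-\varphi(u_2)$, and exploit strict monotonicity of $\varphi$ (so that $\{V>0\}=\{u_1-u_2>0\}$ a.e.) together with the coercivity~(H$A$) and the resolvent identity $k\ast l=1$. For each $\delta>0$ I pick $H_\delta\in C^1(\re)$ with $0\le H_\delta\le 1$, $H_\delta'\ge 0$, $H_\delta\equiv 0$ on $(-\infty,0]$ and $H_\delta\equiv 1$ on $[\delta,\infty)$. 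Since $\varphi(u_i)\in L^2(0,T;H^1_0(\Omega))$, the function $H_\delta(V)$ belongs to $L^2(0,T;H^1_0(\Omega))\cap L^\infty(Q_T)$ and is admissible in~\eqref{eq_weaksolution_t1} for both solutions on every $(0,t_1)$. Writing $w:=u_1-u_2$ and $w_0:=u_{0,1}-u_{0,2}$, the subtraction gives
\begin{align*}
&\int_0^{t_1}\!\!\int_\Omega H_\delta(V)\,\partial_t[k\ast(w-w_0)]\,dx\,dt+\int_0^{t_1}\!\!\int_\Omega H_\delta'(V)(A\nabla V,\nabla V)\,dx\,dt\\
&\qquad=\int_0^{t_1}\!\!\int_\Omega(f_1-f_2)\,H_\delta(V)\,dx\,dt.
\end{align*}
The diffusion term is non-negative by~(H$A$) and can be dropped, while the right-hand side is dominated by $\int_0^{t_1}\!\int_\Omega(f_1-f_2)^+\,dx\,dt$ uniformly in~$\delta$.

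The decisive step is bounding the time term from below. I would reuse the Yosida-regularization/fundamental-identity technique already used in the proof of Lemma~\ref{lem_weak_entropy_solution}: replace $k$ by its associated Yosida kernel $k_\mu$, so that $\partial_t[k_\mu\ast\cdot]$ acts boundedly on $L^1(Q_{t_1})$; apply a difference version of the fundamental identity \cite[Lemma~2.1]{WWZ20} in which the monotonicity of $\varphi$ is used to re-express $H_\delta(V)=H_\delta(\varphi(u_1)-\varphi(u_2))$ as the derivative of a non-negative primitive taken along the segment from $u_2$ to $u_1$; and finally pass $\mu\downarrow 0$ by invoking~\eqref{regularity_assumption}, which controls $\partial_t[k\ast(w-w_0)]$ in $L^1(Q_{t_1})$. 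After sending $\delta\downarrow 0$ by dominated convergence one arrives at the pointwise-in-$t_1$ inequality
\begin{equation*}
\int_\Omega[k\ast(w-w_0)^+](t_1,\cdot)\,dx\le\int_0^{t_1}\!\!\int_\Omega(f_1-f_2)^+\,dx\,dt,\qquad t_1\in(0,T].
\end{equation*}

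Finally I invert the convolution. Setting $G(t):=\int_\Omega(w-w_0)^+(t,\cdot)\,dx$ and $H(t):=\int_\Omega(f_1-f_2)^+(t,\cdot)\,dx$, Fubini recasts the bound as $[k\ast G](t_1)\le[1\ast H](t_1)$; convolving with~$l$ and using $k\ast l=1$ gives $[1\ast G](t_1)\le[1\ast(l\ast H)](t_1)$, i.e.\ $\int_0^{t_1}G(s)\,ds\le\int_0^{t_1}(l\ast H)(s)\,ds$. Taking $t_1=T$ and applying Young's convolution inequality on the right yields $\|(w-w_0)^+\|_{L^1(Q_T)}\le\|l\|_{L^1(0,T)}\|(f_1-f_2)^+\|_{L^1(Q_T)}$, and the elementary pointwise bound $(u_1-u_2)^+\le(w-w_0)^++(u_{0,1}-u_{0,2})^+$ upgrades this to~\eqref{contraction_principle_+}. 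The companion estimate~\eqref{contraction_principle_-} follows by running exactly the same argument with the roles of $u_1$ and $u_2$ interchanged, and~\eqref{contraction_principle_betrag} by summing the two one-sided inequalities. The main obstacle is the fundamental-identity step: because $H_\delta$ is evaluated at $V$ while the Volterra convolution acts on $w-w_0$, the required lower bound cannot be read off directly from~\cite[Lemma~2.1]{WWZ20} and must be obtained via a difference version under the minimal regularity~\eqref{regularity_assumption}; this is precisely the \emph{technical extension} of~\cite{WWZ20} announced in the introduction.
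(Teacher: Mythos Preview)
Your approach is essentially the same as the paper's: the paper simply cites \cite[Theorem~7.1]{WWZ20} for \eqref{contraction_principle_betrag} and states that \eqref{contraction_principle_+} and \eqref{contraction_principle_-} follow by the identical argument with the approximation of $|y|$ replaced by $H_\eps(y):=\sqrt{(y^\pm)^2+\eps^2}-\eps$. The only notable difference is that the paper's specific $H_\eps$ is a convex $C^2$ approximation of $y\mapsto y^\pm$ (so its derivative $H_\eps'$ plays the role of your generic $H_\delta$), and this convexity of the primitive is exactly what allows the fundamental-identity step you correctly flag as the ``main obstacle'' to be carried out as in \cite{WWZ20}.
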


\begin{proof}
Inequality \eqref{contraction_principle_betrag} was shown in \cite[Theorem 7.1]{WWZ20}. The proof of \eqref{contraction_principle_+} and \eqref{contraction_principle_-} is analogous to that of Theorem 7.1 in \cite{WWZ20} with the only difference that in the case of \eqref{contraction_principle_+}, \eqref{contraction_principle_-} we approximate $\re\ni y\mapsto y^+$ and $\re\ni y\mapsto y^-$ by $H_\eps(y):=\sqrt{(y^+)^2+\eps^2}-\eps$, $H_\eps(y):=\sqrt{(y^-)^2+\eps^2}-\eps$, respectively, for $\eps>0$.
\end{proof}

\section{Approximation}\label{section_approximation}\label{sec_approximation}

Let (H$k$),(H$A$),(H$\varphi$), and (Hd) be satisfied. For $m,n\in\na$ we define 
\begin{align*}
\uzmn:=
\begin{cases}
m, &\text{if }u_0> m\\
u_0, &\text{if }-n\leq u_0\leq m\\
-n, &\text{if }u_0<-n
\end{cases}
\text{ and }
\fmn:=
\begin{cases}
m,&\text{if }f>m\\
f,&\text{if }-n\leq f\leq m\\
-n, &\text{if }f<-n.
\end{cases}
\end{align*}
By \cite[Theorem 6.1]{WWZ20} $P(\uzmn,\fmn)$ admits a weak solution $\umn\in W_\varphi(T,\uzmn)\cap L^\infty(Q_T)$.

\begin{lem}\label{lem_a.e.convergence_umn}
Let $\umn\in W_\varphi(T,\uzmn)\cap L^\infty(Q_T)$ be a weak solution to $P(\uzmn,\fmn)$ for any $m,n\in\na$. For fixed $n\in\na$, there exists an element $u_{\infty,n}\in L^1(Q_T)$ such that
$$\umn\rightarrow u_{\infty,n}\text{ a.e. in }Q_T\text{ for }m\rightarrow\infty.$$
Moreover there exists a function $u\in L^1(Q_T)$ such that
$$u_{\infty,n}\rightarrow u\text{ a.e. in }L^1(Q_T)\text{ for }n\rightarrow\infty.$$
\end{lem}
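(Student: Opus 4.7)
The plan is to exploit the monotonicity of the truncations $u_0^{m,n}$ and $f^{m,n}$ together with the one-sided contraction \eqref{contraction_principle_+} from Lemma \ref{Contraction principle}: this will show that $(u_{m,n})_m$ is non-decreasing and $(u_{m,n})_n$ is non-increasing. Combined with a uniform $L^1(Q_T)$-bound obtained from the full contraction \eqref{contraction_principle_betrag} by comparing with the trivial solution of $P(0,0)$, Beppo-Levi's theorem applied successively in $m$ and in $n$ will then yield both convergences claimed in the lemma.

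For the monotonicity step, fix $n\in\na$ and let $m_1\leq m_2$. By construction $u_0^{m_1,n}\leq u_0^{m_2,n}$ and $f^{m_1,n}\leq f^{m_2,n}$ pointwise, so both terms on the right-hand side of \eqref{contraction_principle_+} vanish when it is applied to $u_1=u_{m_1,n}$, $u_2=u_{m_2,n}$; hence $u_{m_1,n}\leq u_{m_2,n}$ a.e.\ in $Q_T$. The same reasoning, with the roles of $m$ and $n$ interchanged and using that $n\mapsto u_0^{m,n}$ and $n\mapsto f^{m,n}$ are non-increasing, shows that $(u_{m,n})_n$ is non-increasing for every fixed $m$. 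For the uniform bound, since $\varphi(0)=0$ the function $u\equiv 0$ is a bounded weak solution of $P(0,0)$ which trivially satisfies \eqref{regularity_assumption}; inserting it into \eqref{contraction_principle_betrag} and using $|u_0^{m,n}|\leq |u_0|$ and $|f^{m,n}|\leq |f|$ yields
\[
\|u_{m,n}\|_{L^1(Q_T)} \leq T\|u_0\|_{L^1(\Omega)} + \|l\|_{L^1(0,T)}\|f\|_{L^1(Q_T)}
\]
uniformly in $m,n\in\na$.

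With these two ingredients in hand, the rest is essentially a double application of monotone convergence: for fixed $n$, the monotone and $L^1(Q_T)$-bounded sequence $(u_{m,n})_m$ converges a.e.\ to some $u_{\infty,n}$ whose $L^1(Q_T)$-norm is still controlled by the same constant via Fatou; the sequence $(u_{\infty,n})_n$ then inherits the monotonicity in $n$ from the finite-$m$ level and the uniform $L^1$-bound, so a second monotone-convergence argument produces the desired limit $u\in L^1(Q_T)$ with $u_{\infty,n}\to u$ a.e. The main technical point to verify (rather than a genuine obstacle) is that the hypotheses of Lemma \ref{Contraction principle} are met, i.e.\ that the bounded weak solutions $u_{m,n}$ supplied by \cite[Theorem 6.1]{WWZ20} satisfy the regularity condition \eqref{regularity_assumption}; for $u_{m,n}\in L^\infty(Q_T)$ this is part of the bounded-data theory and for the reference solution $u\equiv 0$ it is immediate.
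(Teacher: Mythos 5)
Your proof is correct and follows essentially the same route as the paper: monotonicity of $(u_{m,n})$ in $m$ and $n$ via the one-sided contraction estimates, a uniform $L^1(Q_T)$-bound from \eqref{contraction_principle_betrag} (the paper applies it implicitly against the zero solution, exactly as you make explicit), and then monotone convergence plus Fatou's lemma. Your added remark that the approximate solutions must satisfy the regularity condition \eqref{regularity_assumption} for Lemma \ref{Contraction principle} to apply is a fair observation — the paper only imposes this explicitly later, in \eqref{regularity_assumption_mn} — but it does not change the argument.
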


\begin{proof}
Using \eqref{contraction_principle_+} and \eqref{contraction_principle_-} we know that for all $m,n\in\na$
\begin{align}\label{umn_increasing_decreasing}
\umn\leq u_{m+1,n}\quad\text{and}\quad \umn\geq u_{m,n+1}.
\end{align}
From Lemma \eqref{contraction_principle_betrag} we further obtain
\begin{align*}
\sup_{m,n\in\na}\|\umn\|_{L^1(Q_T)}&\leq\sup_{m,n\in\na}(T\|\uzmn\|_{L^1(\Omega)}+\|l\|_{L^1(0,T)}\|\fmn\|_{L^1(Q_T)})\\
&\leq T\|u_0\|_{L^1(\Omega)}+\|l\|_{L^1(0,T)}\|f\|_{L^1(Q_T)}.
\end{align*}
As a consequence we know that the increasing sequence $(\umn)_{m\in\na}$, for fixed $n\in\na$, converges a.e. in $Q_T$ towards an element $u_{\infty,n}$ for $m\rightarrow\infty$.
From \eqref{umn_increasing_decreasing} it follows $u_{\infty,n}\geq u_{\infty,n+1}$ for all $n\in\na$ and therefore we obtain by the same argumentation, that $u_{\infty,n}$ converges a.e. in $Q_T$ for $n\rightarrow\infty$ towards an element $u$. Using \eqref{contraction_principle_betrag} and Fatou's Lemma, we get for any $n\in\na$
\begin{align*}
\begin{split}
\intq|u_{\infty,n}|&\leq\liminf_{m\rightarrow\infty}\intq|\umn|\\
&\leq\liminf_{m\rightarrow\infty}(T\|\uzmn\|_{L^1(\Omega)}+\|l\|_{L^1(0,T)}\|\fmn\|_{L^1(Q_T)})\\
&\leq T\|u_0\|_{L^1(\Omega)}+\|l\|_{L^1(0,T)}\|f\|_{L^1(Q_T)}.
\end{split}
\end{align*}
Consequently,
\begin{align}\label{u_in_L1}
\|u\|_{L^1(Q_T)}&\leq\liminf_{n\rightarrow\infty}\intq|u_{\infty,n}|\leq T\|u_0\|_{L^1(\Omega)}+\|l\|_{L^1(0,T)}\|f\|_{L^1(Q_T)},
\end{align}
which implies $u_{\infty,n}\in L^1(Q_T)$ and $u\in L^1(Q_T)$.
\end{proof}

\begin{lem}\label{lem_bound_b(vmn)}
There exists, for any $n\in\na$, a function $g^n\in L^1(Q_T)$ and, moreover, there exists a function $g\in L^1(Q_T)$ which is independent of $n$, such that a.e. in $Q_T$
\begin{align*}
|\umn|\leq g^n\ \  \forall m,n\in\na\quad\text{ and }\quad|u_{\infty,n}|\leq g\ \  \forall n\in\na.
\end{align*}
\end{lem}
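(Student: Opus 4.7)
The plan is to read off the domination bounds directly from the monotonicity relations \eqref{umn_increasing_decreasing} combined with the $L^1$-estimates and the pointwise convergence already produced in Lemma \ref{lem_a.e.convergence_umn}. The whole argument is a short monotonicity sandwich; no new analytic input is needed beyond what the previous lemma has established.

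For the first bound, I would fix $n\in\na$ and use that $(\umn)_{m\in\na}$ is non-decreasing in $m$ with a.e.\ limit $u_{\infty,n}$. This gives $u_{1,n}\leq\umn\leq u_{\infty,n}$ a.e.\ in $Q_T$ for every $m\in\na$, and therefore
\begin{align*}
|\umn|\leq |u_{1,n}|+|u_{\infty,n}|=:g^n\quad\text{a.e.\ in }Q_T.
\end{align*}
The function $g^n$ lies in $L^1(Q_T)$ because $u_{1,n}\in L^1(Q_T)$ by the contraction principle \eqref{contraction_principle_betrag} applied to $P(u_0^{1,n},f^{1,n})$ (using that the truncations satisfy $\|u_0^{1,n}\|_{L^1(\Omega)}\leq\|u_0\|_{L^1(\Omega)}$ and $\|f^{1,n}\|_{L^1(Q_T)}\leq\|f\|_{L^1(Q_T)}$), and $u_{\infty,n}\in L^1(Q_T)$ has already been shown in Lemma \ref{lem_a.e.convergence_umn}.

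For the second bound, I would use the other half of \eqref{umn_increasing_decreasing}: since $\umn\geq u_{m,n+1}$, passing to the limit $m\to\infty$ gives $u_{\infty,n}\geq u_{\infty,n+1}$, so $(u_{\infty,n})_{n\in\na}$ is non-increasing in $n$ with a.e.\ limit $u$. Hence $u\leq u_{\infty,n}\leq u_{\infty,1}$ a.e.\ in $Q_T$ for every $n\in\na$, and
\begin{align*}
|u_{\infty,n}|\leq |u|+|u_{\infty,1}|=:g\quad\text{a.e.\ in }Q_T,
\end{align*}
with $g\in L^1(Q_T)$ by \eqref{u_in_L1} and Lemma \ref{lem_a.e.convergence_umn}. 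The function $g$ is manifestly independent of $n$, which is the assertion.

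There is no real obstacle here; the only point one has to be careful about is the direction of monotonicity (increasing in $m$, decreasing in $n$), which fixes which extremal element of the sequence plays the role of upper, respectively lower, envelope.
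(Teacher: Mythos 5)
Your proposal is correct and follows essentially the same route as the paper: the monotonicity sandwich $u_{1,n}\leq u_{m,n}\leq u_{\infty,n}$ (and $u\leq u_{\infty,n}\leq u_{\infty,1}$) yielding $g^n:=|u_{1,n}|+|u_{\infty,n}|$ and $g:=|u|+|u_{\infty,1}|$, which the paper phrases via positive and negative parts. Your explicit check that $g^n$ and $g$ lie in $L^1(Q_T)$ is a small, welcome addition that the paper leaves implicit.
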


\begin{proof}
First let $n\in\na$ be fixed. Since $(\umn)_{m\in\na}$ is an increasing function which converges to $u_{\infty,n}$, we know that $\umn\leq u_{\infty,n}$ for all $m\in\na$ and in particular $(\umn)^+\leq (u_{\infty,n})^+$ for all $m\in\na$. Additionally
$$(\umn)^-=\max\{0,-\umn\}\leq\max\{0,-u_{n,1}\}=(u_{n,1})^-.$$
Consequently,
$$|\umn|=(\umn)^++(\umn)^-\leq|u_{\infty,n}|+|u_{1,n}|\quad \forall n\in\na.$$
Analogously, we obtain for arbitrary $n\in\na$
\begin{align*}
|u_{\infty,n}|=(u_{\infty,n})^++(u_{\infty,n})^-\leq |u|+|u_{\infty,1}|.
\end{align*}
\end{proof}

\begin{lem}\label{lem_L1convergence_umn}
For fixed $n\in\na$ there holds
\begin{align*}
\umn\rightarrow u_{\infty,n}\text{ in }L^1(Q_T)\text{ for }m\rightarrow\infty.
\end{align*}
Furthermore,
\begin{align*}
u_{\infty,n}\rightarrow u\text{ in }L^1(Q_T)\text{ for }n\rightarrow\infty.
\end{align*}
\end{lem}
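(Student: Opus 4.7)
The plan is to apply Lebesgue's Dominated Convergence Theorem twice, one for each of the two convergences asserted in the lemma. All the ingredients are already in place from the preceding two lemmas: Lemma \ref{lem_a.e.convergence_umn} supplies the required pointwise a.e.\ convergences, and Lemma \ref{lem_bound_b(vmn)} supplies the $L^1$-integrable dominating functions. So this is mainly a bookkeeping step rather than a technically delicate argument.

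For the first statement I would fix $n \in \mathbb{N}$ and consider the sequence $(u_{m,n})_{m\in\mathbb{N}}$. By Lemma \ref{lem_a.e.convergence_umn} we have $u_{m,n} \to u_{\infty,n}$ a.e.\ in $Q_T$ as $m \to \infty$. By Lemma \ref{lem_bound_b(vmn)} there is $g^n \in L^1(Q_T)$ with $|u_{m,n}| \leq g^n$ a.e.\ for every $m$, and moreover $|u_{\infty,n}| \leq g \in L^1(Q_T)$. Hence the differences are dominated uniformly in $m$ by $g^n + g \in L^1(Q_T)$, and the Dominated Convergence Theorem yields $u_{m,n} \to u_{\infty,n}$ in $L^1(Q_T)$.

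For the second statement, by Lemma \ref{lem_a.e.convergence_umn} we have $u_{\infty,n} \to u$ a.e.\ in $Q_T$ as $n \to \infty$, and by Lemma \ref{lem_bound_b(vmn)} the dominating function $g \in L^1(Q_T)$ is independent of $n$ and satisfies $|u_{\infty,n}| \leq g$. Combined with $u \in L^1(Q_T)$ (already established in \eqref{u_in_L1}), the Dominated Convergence Theorem applies and gives $u_{\infty,n} \to u$ in $L^1(Q_T)$.

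There is no real obstacle here; the only point that has to be checked is that Lemma \ref{lem_bound_b(vmn)} genuinely provides a dominator which is \emph{independent of $m$} for the first step and \emph{independent of $n$} for the second step, which is precisely how that lemma was phrased. An alternative route would be to invoke monotone convergence instead, since $(u_{m,n})_{m}$ is nondecreasing and $(u_{\infty,n})_n$ is nonincreasing by \eqref{umn_increasing_decreasing}, but the dominated version is just as short and does not require separating the positive and negative parts.
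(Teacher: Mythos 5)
Your proof is correct and takes essentially the same route as the paper: both arguments apply Lebesgue's dominated convergence theorem twice, combining the a.e.\ convergences from Lemma \ref{lem_a.e.convergence_umn} with the dominating functions $g^n$ and $g$ from Lemma \ref{lem_bound_b(vmn)}.
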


\begin{proof}
Since $(\umn)_{m\in\na}$ converges for any $n\in\na$ to $u_{\infty,n}$ a.e. in $Q_T$ by Lemma \ref{lem_a.e.convergence_umn} and $|\umn|\leq g^n\in L^1(Q_T)$, we obtain by Lebesgue's dominated convergence theorem the $L^1$-convergence. Using analogously the convergence of $(u_{\infty,n})_{n\in\na}$ a.e. in $Q_T$ to $u$ from Lemma~\ref{lem_a.e.convergence_umn} and the boundedness $|u_{\infty,n}|\leq g\in L^1(Q_T) $ by Lemma \ref{lem_bound_b(vmn)}, we obtain $u_{\infty,n}\rightarrow u$ in $L^1(Q_T)$ for $n\rightarrow\infty$.
\end{proof}

\section{Passage to the limit}\label{sec_passage_to_the_limit}

Let (H$k$), (H$A$), (H$\varphi$), and (Hd) be satisfied and $\umn$ for $m,n\in\na$ the weak solution to $P(\uzmn,\fmn)$ as defined in Section \ref{section_approximation}, such that
\begin{align}\label{regularity_assumption_mn}
k\ast(\umn-\uzmn)\in {}_0W^{1,1}(0,T;L^1(\Omega))
\end{align}
is satisfied.
By Lemma \ref{lem_weak_entropy_solution} we know that $\vmn:=\varphi(\umn)$ is an entropy solution to $P(\uzmn,\fmn)$. Since $\varphi$ is continuous, we know by Lemma \ref{lem_a.e.convergence_umn} that
$$\vmn=\varphi(\umn)\overset{m\rightarrow\infty}\longrightarrow\varphi(u_{\infty,n})=:v_{\infty,n}\overset{n\rightarrow\infty}\longrightarrow\varphi(u)=:v \quad\text{a.e. in }Q_T.$$
Analogously, we get the convergences
\begin{align}\label{a.e.convergence_T_K(vmn)}
T_K(\vmn)\overset{m\rightarrow\infty}\longrightarrow T_K(v_{\infty,n})\overset{n\rightarrow\infty}\longrightarrow T_K(v)\text{ a.e. in }Q_T, \ \forall K>0.
\end{align}

\begin{lem}\label{lem_convergence_T_K(vmn)}
For all $K>0$ 
\begin{align*}
T_K(\vmn)\rightharpoonup T_K(v_{\infty,n})\text{ in }L^2(0,T;H^1_0(\Omega)),\forall n\in\na, \text{ for }m\rightarrow\infty
\end{align*}
and
\begin{align*}
T_K(v_{\infty,n})\rightharpoonup T_K(v)\text{ in }L^2(0,T;H^1_0(\Omega))\text{ for }n\rightarrow\infty.
\end{align*}
\end{lem}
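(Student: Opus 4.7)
The plan is to obtain a uniform $L^2(0,T;H^1_0(\Omega))$-bound on $T_K(\vmn)$, independent of both $m$ and $n$, and then identify the weak limits via the a.e.~convergence \eqref{a.e.convergence_T_K(vmn)} established in Lemma~\ref{lem_a.e.convergence_umn}. Thanks to \eqref{regularity_assumption_mn}, the bounded function $T_K(\vmn)\in L^2(0,T;H^1_0(\Omega))\cap L^\infty(Q_T)$ is an admissible test function in \eqref{eq_weaksolution}, yielding
\begin{align*}
\intq T_K(\vmn)\partial_t[k\ast(\umn-\uzmn)]+\intq(A\nabla\vmn,\nabla T_K(\vmn))=\intq\fmn T_K(\vmn).
\end{align*}
The chain rule gives $\nabla T_K(\vmn)=\mathds{1}_{\{|\vmn|\leq K\}}\nabla\vmn$, so by coercivity (H$A$) the diffusion term is bounded below by $\nu\|\nabla T_K(\vmn)\|_{L^2(Q_T)}^2$, while the source term is controlled by $K\|f\|_{L^1(Q_T)}$.

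For the fractional time term I introduce the convex function $\Psi_K(r):=\int_0^r T_K(\varphi(\sigma))\,d\sigma$, which satisfies $\Psi_K'=T_K\circ\varphi$ and $0\leq\Psi_K(r)\sign(r)\leq K|r|$. Applying the fundamental identity \cite[Lemma 2.1]{WWZ20} to $\Psi_K$, as was done in the proof of Lemma~\ref{lem_weak_entropy_solution} (with a Yosida-type regularisation of $k$ and passage to the limit), produces a lower bound of the form
\begin{align*}
\intq T_K(\vmn)\partial_t[k\ast(\umn-\uzmn)]\geq -K\|k\|_{L^1(0,T)}\|u_0\|_{L^1(\Omega)},
\end{align*}
after using $\Psi_K(\umn)\geq 0$ and $|\Psi_K(\uzmn)|\leq K|u_0|$. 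Combining the three estimates and invoking Poincaré's inequality gives
\begin{align*}
\|T_K(\vmn)\|_{L^2(0,T;H^1_0(\Omega))}^2\leq C\bigl(K,T,\|f\|_{L^1(Q_T)},\|u_0\|_{L^1(\Omega)}\bigr),
\end{align*}
uniformly in $m,n$.

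Having the uniform bound, the identification of the weak limits is standard. Fix $n$: by reflexivity every subsequence of $(T_K(\vmn))_m$ admits a further weakly convergent subsequence in $L^2(0,T;H^1_0(\Omega))$, and the a.e.~convergence \eqref{a.e.convergence_T_K(vmn)} (together with the $L^\infty$-bound $|T_K(\vmn)|\leq K$) forces the weak limit to coincide with $T_K(v_{\infty,n})$; the Urysohn subsequence principle then yields convergence of the entire sequence. Since the bound above is uniform in $n$ as well, the very same argument applied to $(T_K(v_{\infty,n}))_n$ yields $T_K(v_{\infty,n})\rightharpoonup T_K(v)$ in $L^2(0,T;H^1_0(\Omega))$ as $n\to\infty$. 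I expect the main obstacle to be the careful use of the fundamental identity to extract a lower bound for the time term that depends only on $\|u_0\|_{L^1(\Omega)}$ and not on the truncation levels $m,n$; the convexity of $\Psi_K$ together with the Lipschitz bound $|\Psi_K(r)|\leq K|r|$ are precisely what allows the initial-data contribution to be absorbed uniformly in the approximation parameters.
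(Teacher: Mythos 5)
Your proof is correct and follows essentially the same route as the paper: test the weak formulation with $T_K(\vmn)$, control the memory term via the fundamental identity applied to a convex primitive of $T_K\circ\varphi$ (through the Yosida regularisation of $k$), use coercivity (H$A$) and the bound $K\|f\|_{L^1(Q_T)}$ to obtain a gradient estimate uniform in $m,n$, and then identify the weak limits from the a.e.\ convergence \eqref{a.e.convergence_T_K(vmn)} by a subsequence argument. The only (harmless) difference is that you anchor the primitive at $0$ and retain the resulting $-K\|k\|_{L^1(0,T)}\|u_0\|_{L^1(\Omega)}$ contribution from the initial datum, whereas the paper anchors it at $v_0^{m,n}$ and discards the memory term as non-negative.
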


\begin{proof}
We fix $K>0$. Obviously, we have
$$\|T_K(\vmn)\|_{L^2(Q_T)}^2\leq T|\Omega|K^2\quad\forall m,n\in\na.$$
Hence, we know by \eqref{a.e.convergence_T_K(vmn)} that there exist (not relabelled) subsequences of $(T_K(\vmn))_{m\in\na}$ and $(T_K(v_{\infty,n}))_{n\in\na}$ such that
\begin{align}\label{T_K(vmn)_convergenceL2}
\begin{aligned}
T_K(\vmn)&\rightharpoonup T_K(v_{\infty,n})\text{ in }L^2(0,T;L^2(\Omega))\text{ for }m\rightarrow\infty.\\
\text{and}\quad T_K(v_{\infty,n})&\rightharpoonup T_K(v)\text{ in }L^2(0,T;L^2(\Omega)\text{ for }n\rightarrow\infty.
\end{aligned}
\end{align}
Now we fix $m,n\in\na$ and use $T_K(\vmn)$ as a test function in \eqref{eq_weaksolution} to get
\begin{align*}
&\intq T_K(\vmn)\partial_t[k\ast(b(\vmn)-b(\vzmn))]+\intq(A\nabla\vmn,\nabla T_K(\vmn))\\
&=\intq \fmn T_K(\vmn).
\end{align*}
For $\lambda>0$ let $k_\lambda$ be the kernel associated to the Yosida approximation of the operator
\begin{align*}
D(L)&:=\left\{w\in L^1(0,T;L^{1}(\Omega)):k\ast w\in{}_0W^{1,1}(0,T;L^{1}(\Omega)) \right\},\\
L&:=\partial_t(k\ast\cdot).
\end{align*}
By using (H$A$) we obtain
\begin{align}\label{eq_1}
\begin{split}
&\intq T_K(\vmn)\partial_t[k_\lambda\ast(b(\vmn)-b(\vzmn))]+\nu\intq|\nabla T_K(\vmn)|^2\\
&\leq \intq \fmn T_K(\vmn)+\intq T_K(\vmn)\partial_t[(k_\lambda-k)\ast(b(\vmn)-b(\vzmn))].
\end{split}
\end{align}
Since $b(\vmn)-b(\vzmn)\in D(L)$ the last term converges to zero for $\lambda\rightarrow0$. The fundamental identity provides
\begin{align*}
\intq T_K(\vmn)\partial_t[k_\lambda\ast(b(\vmn)-b(\vzmn))]&\geq\intq\partial_t\left[k_\lambda\ast\int_{\vzmn}^{\vmn} T_K(\sigma)\,db(\sigma)\right]\\
&=\int_\Omega\left[k_\lambda\ast\int_{\vzmn}^{\vmn} T_K(\sigma)\,db(\sigma)\right](T).
\end{align*}
Letting $\lambda\rightarrow0$ in \eqref{eq_1} we obtain, since $k_\lambda\rightarrow k$ in $L^1(0,T)$,
\begin{align*}
\int_\Omega\left[k\ast\int_{\vzmn}^{\vmn}T_K(\sigma)\,db(\sigma)\right](T)+\nu\intq|\nabla T_K(\vmn)|^2\leq\intq \fmn T_K(\vmn).
\end{align*}
Since $k$ is non-negative and $b$ non-decreasing, we know, that 
\begin{align*}
\int_\Omega\left[k\ast\int_{\vzmn}^{\vmn}T_K(\sigma)\,db(\sigma)\right](T)\geq0.
\end{align*}
Using $|\fmn|\leq|f|$ and $\nu>0$ we get
$$\|\nabla T_K(\vmn)\|_{L^2(Q_T)}^2\leq \frac{K}{\nu}\|f\|_{L^1(Q_T)}.$$
Together with \eqref{a.e.convergence_T_K(vmn)} and \eqref{T_K(vmn)_convergenceL2} we conclude that $T_K(\vmn)\rightharpoonup T_K(v_{\infty,n})$ in $L^2(0,T;H^1_0(\Omega))$ for $m\rightarrow\infty$ and $T_K(v_{\infty,n})\rightharpoonup T_K(v)$ in $L^2(0,T;H^1_0(\Omega))$ for $n\rightarrow\infty$.
\end{proof}

\begin{teo}
Let (H$k$),(H$A$),(H$\varphi$), and (Hd) be satisfied. For any $m,n\in\na$ let $\umn$ be a weak solution to $P(\umn,\fmn)$ such that \eqref{regularity_assumption_mn} holds. Then $v:=\lim_{m,n\rightarrow\infty}\varphi(\umn)$ is an entropy solution to \ref{equation}.
\end{teo}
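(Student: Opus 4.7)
The plan is to fix an arbitrary admissible test quadruple $(\phi,\zeta,S,k_1,k_2)$ as in Definition 2.3, invoke Lemma \ref{lem_weak_entropy_solution} so that each $v_{m,n}=\varphi(u_{m,n})$ is an entropy solution to $P(u_0^{m,n},f^{m,n})$, and pass to the limit iteratively (first $m\to\infty$ with $n$ fixed, then $n\to\infty$) in each of the four terms of its entropy inequality. The tools at hand are the monotone a.e.\ and $L^1(Q_T)$-convergence of $u_{m,n}$ from Lemmas \ref{lem_a.e.convergence_umn} and \ref{lem_L1convergence_umn}, the stagewise $L^1$-domination $|u_{m,n}|\le g^n$ and $|u_{\infty,n}|\le g$ from Lemma \ref{lem_bound_b(vmn)}, the weak $L^2(0,T;H^1_0(\Omega))$-convergence of $T_K(v_{m,n})$ from Lemma \ref{lem_convergence_T_K(vmn)}, the identities $b(v_{m,n})=u_{m,n}$, $b(v_0^{m,n})=u_0^{m,n}$, and the continuity of $\varphi,S,S'$.

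For the $k_1$-term, the substitution $\sigma=\varphi(r)$ recasts $\int_{v_0^{m,n}}^{v_{m,n}}S(\sigma-\phi)\,db(\sigma)$ as $\int_{u_0^{m,n}}^{u_{m,n}}S(\varphi(r)-\phi)\,dr$, pointwise bounded by $\|S\|_\infty(|u_{m,n}|+|u_0|)$. Iterated $L^1$-domination (by $g^n+|u_0|$, then by $g+|u_0|$) and Lebesgue's dominated convergence theorem give $L^1(Q_T)$-convergence of this integrand; Young's inequality $\|k_1\ast w\|_{L^1(Q_T)}\le\|k_1\|_{L^1(0,T)}\|w\|_{L^1(Q_T)}$ together with $\zeta_t\in L^\infty$ then passes this term to its limit. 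Since $k_2$ is non-increasing and bounded at $0^+$ (hence of bounded variation), the operator $w\mapsto\partial_t[k_2\ast w]=k_2(0^+)w+\dot k_2\ast w$ is bounded on $L^1(Q_T)$, so the $L^1$-convergence $u_{m,n}-u_0^{m,n}\to u-u_0$ yields $L^1$-convergence of $\partial_t[k_2\ast(b(v_{m,n})-b(v_0^{m,n}))]$; combined with the $L^\infty$-bounded a.e.-convergent factor $\zeta S(v_{m,n}-\phi)$, this passes the $k_2$-term. On the right-hand side, $f^{m,n}\to f$ in $L^1(Q_T)$ by dominated convergence and $\zeta S(v_{m,n}-\phi)\to\zeta S(v-\phi)$ a.e.\ with $L^\infty$-bound give convergence.

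The delicate term is the elliptic one $\intq\zeta(A\nabla v_{m,n},\nabla S(v_{m,n}-\phi))$, as both the multiplier $S'(v_{m,n}-\phi)$ and the gradient $\nabla v_{m,n}$ depend on the sequence and only weak gradient convergence is available. Choose $M$ with $\supp S'\subset[-M,M]$ and set $K:=M+\|\phi\|_\infty$, so that $\nabla S(v_{m,n}-\phi)=S'(v_{m,n}-\phi)(\nabla T_K(v_{m,n})-\nabla\phi)$, and put $\psi_{m,n}:=\zeta S'(v_{m,n}-\phi)$, with $0\le\psi_{m,n}\le\|\zeta\|_\infty$ and $\psi_{m,n}\to\psi:=\zeta S'(v-\phi)$ a.e. The linear piece $\intq\psi_{m,n}(A\nabla T_K(v_{m,n}),\nabla\phi)$ converges by weak-strong duality, since $\psi_{m,n}A^T\nabla\phi\to\psi A^T\nabla\phi$ strongly in $L^2(Q_T)$ by dominated convergence, paired against $\nabla T_K(v_{m,n})\rightharpoonup\nabla T_K(v)$. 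For the quadratic piece I would use the algebraic monotonicity $(A\xi,\xi)\ge(A\xi,\eta)+(A\eta,\xi-\eta)$ which stems from $(A(\xi-\eta),\xi-\eta)\ge 0$: taking $\xi=\nabla T_K(v_{m,n})$, $\eta=\nabla T_K(v)$, multiplying by $\psi_{m,n}\ge 0$ and integrating, two further weak-strong pairings yield $\liminf\intq\psi_{m,n}(A\nabla T_K(v_{m,n}),\nabla T_K(v_{m,n}))\ge\intq\psi(A\nabla T_K(v),\nabla T_K(v))$.

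Writing the entropy inequality for $v_{m,n}$ schematically as $T_1^{m,n}+T_2^{m,n}+T_3^{m,n}\le T_4^{m,n}$ with $T_3$ the elliptic term, one isolates $T_3^{m,n}\le T_4^{m,n}-T_1^{m,n}-T_2^{m,n}$ and takes $\liminf$ on the left, $\lim$ on the right; the three ``nice'' terms converge to their counterparts for $v$, and the lower semicontinuity of $T_3^{m,n}$ preserves the inequality in the limit. The main obstacle is precisely this elliptic term: the algebraic monotonicity of $\xi\mapsto(A\xi,\xi)$ and the compact support of $S'$ (which localises us to truncations where Lemma \ref{lem_convergence_T_K(vmn)} applies) are the two crucial structural ingredients, while the iterated-limit order ($m\to\infty$ first, $n\to\infty$ second) is forced by the stagewise-only $L^1$ domination in Lemma \ref{lem_bound_b(vmn)}, which is what enables the dominated-convergence steps in the first two and fourth terms.
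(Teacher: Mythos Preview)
Your proposal is correct and follows essentially the same approach as the paper: the handling of the $k_1$-, $k_2$-, and right-hand side terms via the stagewise $L^1$-domination of Lemma~\ref{lem_bound_b(vmn)} and dominated convergence is identical, and your monotonicity inequality $(A\xi,\xi)\ge(A\xi,\eta)+(A\eta,\xi-\eta)$ for the elliptic term is precisely the algebraic content of the paper's decomposition $I_1+I_2+I_3$ (drop $I_1\ge 0$, pass to the limit in $I_2$ and $I_3$ by weak--strong pairings). The only cosmetic difference is that the paper takes $\eta=\nabla T_M(v)$ from the outset and writes the iterated limit $\lim_{n\to\infty}\lim_{m\to\infty}$ explicitly in each term, whereas you summarise this as a single $\liminf$; since you already flagged the iterated-limit order as forced by Lemma~\ref{lem_bound_b(vmn)}, this is not a gap.
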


\begin{proof}
Let $m,n\in\na$. By Lemma \ref{lem_weak_entropy_solution} we know that $\vmn=\varphi(\umn)$ is an entropy solution to $P(\uzmn,\fmn)$. So, for any $S\in\mathcal{P},\phi\in H^1_0(\Omega)\cap L^\infty(\Omega)$ and $\zeta\in\mathcal{D}([0,T)),\zeta\geq0,$ there holds
\begin{align}\label{entropy_inequality_forLimes}
\begin{split}
&-\intq\zeta_t\left[k_1\ast\int_{\vzmn}^{\vmn}S(\sigma-\phi)\,db(\sigma)\right]+\intq\zeta\partial_t[k_2\ast(b(\vmn)-b(\vzmn))]S(\vmn-\phi)\\
&+\intq\zeta (A\nabla\vmn,\nabla S(\vmn-\phi))\leq \intq\zeta\fmn S(\vmn-\phi),
\end{split}
\end{align}
where $k_1,k_2\in L^1(0,T)$ are non-increasing and non-negative with $k=k_1+k_2$ and $k_2(0^+)<\infty$.\\
Since $\supp S'$ is compact, there exists a constant $L\geq0$ such that $\supp S'\subseteq[-L,L]$ and, therefore, for $M:=L+\|\phi\|_{L^\infty(\Omega)}$ we obtain
\begin{align*}
&\intq\zeta(A\nabla\vmn,\nabla S(\vmn-\phi))\\
&=\intq\zeta S'(T_M(\vmn)-\phi)(A\nabla T_M(\vmn),\nabla(T_M(\vmn)-\phi)).
\end{align*}
Indeed, if $|\vmn|\geq M$, we have
\begin{align*}
|\vmn-\phi|\geq|\vmn|-|\phi|\geq L+\|\phi\|_{L^\infty(\Omega)}-|\phi|\geq L
\end{align*}
and, therefore, $S'(\vmn-\phi)=0$ for $|\vmn|\geq M$. Since $S'$ is continuous, by \eqref{a.e.convergence_T_K(vmn)}, we get
$$S'(T_M(\vmn)-\phi)\overset{m\rightarrow\infty}\longrightarrow S'(v_{\infty,n}-\phi)\overset{n\rightarrow\infty}\longrightarrow S'(T_M(v)-\phi)\text{ a.e. in } Q_T.$$
Additionally, $S'$ has compact support and, therefore, by using Lebesgue's dominated convergence theorem, we obtain
\begin{align}\label{convergence S'(T_M(vmn)-phi)}
\begin{aligned}
S'(T_M(\vmn)-\phi)&\rightarrow S'(T_M(v_{\infty,n})-\phi)\text{ in }L^1(Q_T)\text{ for }m\rightarrow\infty\\
\text{and}\quad S'(T_M(v_{\infty,n})-\phi)&\rightarrow S'(T_M(v)-\phi)\text{ in }L^1(Q_T)\text{ for }n\rightarrow\infty
\end{aligned}
\end{align}
Now, we consider
\begin{align*}
&\zeta \Big(A\nabla T_M(\vmn),S'(T_M(\vmn)-\phi)\nabla T_M(\vmn)\Big)\\
&=\zeta S'(T_M(\vmn)-\phi)\Big(A\nabla (T_M(\vmn)-T_M(v)),\nabla(T_M(\vmn)-T_M(v))\Big)\\
&\ +\zeta \Big(A\nabla T_M(v),S'(T_M(\vmn)-\phi)\nabla(T_M(\vmn)-T_M(v))\Big)\\
&\ +\zeta \Big(A\nabla T_M(\vmn),S'(T_M(\vmn)-\phi)\nabla T_M(v)\Big)\\
&=:I_1+I_2+I_3.
\end{align*}
Having in mind that $\zeta\geq0$ and $S'\geq0$, the coercivity condition (H$A$) implies
\begin{align*}
\intq I_1\geq\nu\intq\zeta|\nabla T_M(\vmn)|^2 S'(T_M(\vmn)-\phi)\geq0\quad\forall m,n\in\na
\end{align*}
Using Lemma \ref{lem_convergence_T_K(vmn)} and the convergence \eqref{convergence S'(T_M(vmn)-phi)}, we know
\begin{align*}
\lim_{n\rightarrow\infty}\lim_{m\rightarrow\infty}\intq I_2=0
\end{align*}
and
\begin{align*}
\lim_{n\rightarrow\infty}\lim_{m\rightarrow\infty}\intq I_3= &\intq \zeta (A\nabla T_M(v),S'(T_M(v)-\phi)\nabla T_M(v))\\
&=\intq\zeta (A\nabla v,S'(v-\phi)\nabla v).
\end{align*}
It follows that
\begin{align*}
&\liminf_{n\rightarrow\infty}\liminf_{m\rightarrow\infty}\intq\zeta \big(A\nabla T_M(\vmn),S'(T_M(\vmn)-\phi)\nabla (T_M(\vmn)-\phi)\big)\\
&\geq \intq\zeta (A\nabla v,\nabla S(v-\phi)).
\end{align*}
According to the first term in \eqref{entropy_inequality_forLimes} we know that 
\begin{align*}
\int_{\vzmn}^{\vmn}S(\sigma-\phi)\,db(\sigma)\overset{m\rightarrow\infty}\longrightarrow\int_{v_0^{\infty,n}}^{v_{\infty,n}}S(\sigma-\phi)\,db(\sigma)\overset{n\rightarrow\infty}\longrightarrow\int_{v_0}^vS(\sigma-\phi)\,db(\sigma)\text{ a.e. in }Q_T,
\end{align*}
where $v_{\infty,n}^0:=\varphi(u_{\infty,n}^0):=\varphi(\lim_{m\rightarrow\infty}\uzmn)$.
Using Lemma \ref{lem_bound_b(vmn)}, since $S$ is bounded and $|b(\vzmn)|\leq|b(v_0)|$, we know that there exists a constant $C\geq0$ such that
\begin{align*}
\left|\int_{\vzmn}^{\vmn}S(\sigma-\phi)\,db(\sigma)\right|\leq C(g^n+|b(v_0|)\in L^1(Q_T)\quad\forall m,n\in\na.
\end{align*}
Analogously there exists a constant $C\geq0$ such that
\begin{align*}
\left|\int_{v_{0}^{\infty,n}}^{v_{\infty,n}}S(\sigma-\phi)\,db(\sigma)\right|\leq C(g+|b(v_0)|)\in L^1(Q_T)\quad\forall n\in\na.
\end{align*}
Hence Lebesgue's dominated convergence theorem implies the convergence in $L^1(Q_T)$ and, therefore,
\begin{align*}
\lim_{n\rightarrow\infty}\lim_{m\rightarrow\infty}\left(-\intq\zeta_t\left[k_1\ast\int_{\vzmn}^{\vmn}S(\sigma-\phi)\,db(\sigma)\right]\right)=-\intq\zeta_t\left[k_1\ast\int_{v_0}^vS(\sigma-\phi)\,db(\sigma)\right].
\end{align*}
It remains to show the convergence for 
\begin{align*}
\intq\zeta\partial_t[k_2\ast(b(\vmn)-b(\vzmn))]S(\vmn-\phi).
\end{align*}
Using Lemma \ref{lem_L1convergence_umn}, there holds for a.e. $t\in(0,T)$
\begin{align*}
&\partial_t[k_2\ast(b(\vmn)-b(\vzmn))](t)\\
&=k_2(0^+)(b(\vmn(t)-b(\vzmn))+\int_0^t b(\vmn(t-s))-b(\vzmn)\,dk_2(s)\\
&\overset{m\rightarrow\infty}\longrightarrow k_2(0^+)(b(v_{\infty,n}(t))-b(v_{\infty,n}^0))+\int_0^t b(v_{\infty,n}(t-s))-b(v_{\infty,n}^0)\,dk_2(s)\\
&\overset{n\rightarrow\infty}\longrightarrow k_2(0^+)(b(v(t))-b(v_0))+\int_0^t b(v(t-s))-b(v_0)\,dk_2(s),
\end{align*}
where the convergences hold in $L^1(Q_T)$. Consequently,
\begin{align*}
&\lim_{n\rightarrow\infty}\lim_{m\rightarrow\infty}\intq\zeta\partial_t[k_2\ast(b(\vmn)-b(\vzmn))]S(\vmn-\phi)\\
&=\intq\zeta\partial_t[k_2\ast(b(v)-b(v_0))]S(v-\phi).
\end{align*}
Summing up, we get
\begin{align*}
&-\intq\zeta_t\left[k_1\ast\int_{v_0}^v S(\sigma-\phi)\,db(\sigma)\right]+\intq\zeta\partial_t[k_2\ast(b(v)-b(v_0))]S(v-\phi)\\
&+\intq\zeta(A\nabla v,\nabla S(v-\phi))\\
&\leq\lim_{n\rightarrow\infty}\lim_{m\rightarrow\infty}\intq\zeta \fmn S(\vmn-\phi)=\intq\zeta f S(v-\phi)
\end{align*}
and hence $v$ is an entropy solution to \ref{equation}.
\end{proof}

\begin{remark}
Let for $i=1,2$, $u_{0,i}\in L^1(\Omega),\ f_i\in L^1(Q_T)$, and $v_i$ be an entropy solution to $P(u_{0,i},f_i)$, such that $v_i$ is the limit of $\varphi(u_{m,n}^i)$, where $u_{m,n}^i$ is a weak solution to $P(u_{0,i}^{m,n},f_i^{m,n})$. Here, $u_{0,i}^{m,n}$ and $f_i^{m,n}$ are the bounded approximations of $u_{0,i}$ and $f_i$ defined analogously as in Section \ref{section_approximation}.
Then the contraction principle
\begin{align*}
\|b(v_1)-b(v_2)\|_{L^1(Q_T}\leq T\|u_{0,1}-u_{0,2}\|_{L^1(\Omega)}+\|l\|_{L^1(0,T)}\|f_1-f_2\|_{L^1(Q_T)}
\end{align*}
holds. The proof is a consequence of the convergence of the approximate solutions.
\end{remark}

\bibliographystyle{abbrv}

\bibliography{L1_Bibliography.bib}

\end{document}